\documentclass[12pt]{amsart}

\usepackage[utf8]{inputenc}
\usepackage[T1,T2A]{fontenc}
\usepackage[english]{babel}
\usepackage{amsfonts}
\usepackage{amsbsy}
\usepackage{amssymb}
\usepackage{fixltx2e,graphicx}
\usepackage{mathrsfs}
\usepackage{hyperref}
\usepackage{longtable}
\usepackage{enumitem}
\usepackage{comment}

\textwidth=16.0cm \oddsidemargin=0.2cm \evensidemargin=0.2cm
\topmargin=-0.5cm \textheight=22.5cm

\newcommand{\Hom}{\operatorname{Hom}}

\newcommand{\SL}{\operatorname{SL}}

\newcommand{\Aut}{\operatorname{Aut}}
\newcommand{\ord}{\operatorname{ord}}

\newcommand{\ZZ}{\mathbb Z}
\newcommand{\QQ}{\mathbb Q}

\newcommand{\GG}{\mathbb G}
\newcommand{\KK}{\mathbb K}

\newtheorem{theorem}{Theorem}

\newtheorem{proposition}{Proposition}

\newtheorem{corollary}{Corollary}

\newtheorem*{question*}{Question}

\theoremstyle{definition}

\newtheorem{definition}{Definition}

\theoremstyle{remark}

\newtheorem{remark}{Remark}

\makeatletter
\newcommand{\newsectionstyle}{%
  \renewcommand{\@secnumfont}{\bfseries}
  \renewcommand\section{\@startsection{section}{2}%
    \z@{.5\linespacing\@plus.7\linespacing}{-.5em}%
    {\normalfont\bfseries}}%
}
\let\oldsection\section
\let\old@secnumfont\@secnumfont
\newcommand{\originalsectionstyle}{%
  \let\@secnumfont\old@secnumfont
  \let\section\oldsection
}
\makeatother

\righthyphenmin=3

\begin{document}

\renewcommand{\proofname}{Proof}
\renewcommand{\abstractname}{Abstract}
\renewcommand{\refname}{References}
\renewcommand{\figurename}{Figure}
\renewcommand{\tablename}{Table}

\title[On the existence of $B$-root subgroups]
{On the existence of $B$-root subgroups on affine spherical varieties}

\author{Roman Avdeev and Vladimir Zhgoon}


\address{%
{\bf Roman Avdeev} \newline\indent HSE University, Moscow, Russia}

\email{suselr@yandex.ru}

\address{%
{\bf Vladimir Zhgoon} \newline\indent Scientific Research Institute for System Analysis of the Russian Academy of Sciences, Moscow, Russia \newline\indent HSE University, Moscow, Russia}

\email{zhgoon@mail.ru}


\subjclass[2010]{14R20, 14M27, 13N15}

\keywords{Additive group action, spherical variety, Demazure root, locally nilpotent derivation}

\begin{abstract}
Let $X$ be an irreducible affine algebraic variety that is spherical with respect to an action of a connected reductive group~$G$.
In this paper we provide sufficient conditions, formulated in terms of weight combinatorics, for the existence of one-parameter additive actions on~$X$ normalized by a Borel subgroup $B \subset G$.
As an application, we prove that every $G$-stable prime divisor in~$X$ can be connected with the open $G$-orbit by means of a suitable $B$-normalized one-parameter additive action.
\end{abstract}

\maketitle

\newsectionstyle

\section{}

Let $X$ be an irreducible algebraic variety over an algebraically closed field $\KK$ of characteristic zero equipped with an action of a connected reductive algebraic group~$G$.
Every nontrivial regular action on~$X$ of the additive group $\GG_a = (\KK,+)$ induces an algebraic subgroup in the automorphism group $\Aut(X)$, called a \textit{$\GG_a$-subgroup}.
For an arbitrary $\GG_a$-subgroup $H$ on~$X$, every nonzero element of the one-dimensional Lie algebra $\mathrm{Lie}(H)$ naturally defines a locally nilpotent derivation (LND for short) $\partial$ on the algebra of regular functions~$\KK[X]$, and in the case of quasi-affine~$X$ the subgroup $H$ can be recovered by taking the exponent of~$\partial$.

In the present paper we are interested in $\GG_a$-subroups on~$X$ normalized by the action of a Borel subgroup $B \subset G$.
Following~\cite{AA}, we call such $\GG_a$-subgroups \textit{$B$-root subgroups} on~$X$.
For every $B$-root subgroup $H$ on~$X$, the adjoint action of $B$ on $\mathrm{Lie}(H)$ reduces to multiplying by a character of~$B$, which we denote by~$\chi_H$ and call the \textit{weight} of~$H$.
If~$\partial$ is an LND on~$\KK[X]$ corresponding to~$H$ then $\partial$ is also normalized by~$B$ with the same weight~$\chi_H$.

\section{} \label{S_types_of_colors}

From now on we assume that $X$ is a \textit{spherical} $G$-variety, that is, $X$ is normal and possesses an open $B$-orbit.
Let $\mathcal D^B$ (resp.~$\mathcal D^G$) denote the finite set of all $B$-stable (resp. $G$-stable) prime divisors in~$X$.
Elements of the set $\mathcal D = \mathcal D^B \setminus \mathcal D^G$ are traditionally called \textit{colors} of~$X$.

We now recall the well-known division of colors into three types (see~\cite{BORB}, \cite{BBORB}).
Fix an arbitrary color $D \in \mathcal D$.
Then one can always choose a minimal parabolic subgroup $Q \supset B$ in~$G$ such that $QD \ne D$.
For every subgroup $F \subset Q$, let $\overline F$ be its image in $Q/Q_r$, where $Q_r$ is the solvable radical of~$Q$.
Choose an arbitrary point $z$ in the open $B$-orbit in~$X$ and let $Q_z$ be the stabilizer of~$z$ in~$Q$.
Observe that $Qz \cap D \ne \varnothing$.
Since $Q_r \subset B$, the natural morphism
\begin{equation} \label{mor_Q_r}
Qz \simeq Q / Q_z \to Q / (Q_zQ_r) \simeq \overline{Q} / \overline{Q}_z
\end{equation}
of factorizing by~$Q_r$ induces a codimension-preserving bijection between $B$-orbits in $Qz$ and $\overline B$-orbits in $\overline Q/ \overline Q_z$.
In particular, $\overline Q/ \overline Q_z$ contains an open $\overline B$-orbit.
Since $\overline Q$ is isomorphic to either $\SL_2$ or $\mathrm{PSL}_2$, there are the following three possibilities for~$Q_z$.

Type~$(U)$: $\overline Q_z$ contains a maximal unipotent subgroup in~$\overline Q$.
In this case, $Qz \setminus Bz$ is a single $B$-orbit of codimension~$1$, which coincides with $Qz \cap D$.

Type~$(T)$: $\overline Q_z$ is a maximal torus in~$\overline Q$.
In this case, $Qz \setminus Bz$ contains two $B$-orbits of codimension~$1$ and one of them coincides with $Qz \cap D$.

Type~$(N)$: $\overline Q_z$ is the normalizer of a maximal torus in~$\overline Q$.
In this case, $Qz \setminus Bz$ is a single $B$-orbit of codimension~$1$, which coincides with $Qz \cap D$.

It is known that the above-defined type does not depend on the choice of the minimal parabolic subgroup $Q \supset B$ satisfying $QD \ne D$ (see~\cite[Prop.~1]{BBORB}).
This makes the type of every color in~$X$ well defined.

\begin{remark}
The above-defined types $(U)$, $(T)$, $(N)$ of colors in~$X$ coincide with the types $b$, $a$, $a'$, respectively, in the notation of Luna (see~\cite[\S\S\,2.7,\,3.4]{Lu97} or~\cite[\S\,30.10]{Tim}).
\end{remark}

\section{}

As follows from~\cite[Prop.~1.6]{AA}, for every $B$-root subgroup $H$ on~$X$ there is at most one divisor $D \in \mathcal D^B$ such that $HD \ne D$.
The following result generalizes~\cite[Cor.~4.25]{AA} where the case of affine~$X$ was considered.

\begin{proposition}\label{UN_colors}
Suppose a $B$-root subgroup $H$ on $X$ satisfies $HD \ne D$ for some $D \in \mathcal D^B$.
Then $D$ is either $G$-stable or is a color of type~$(T)$.
\end{proposition}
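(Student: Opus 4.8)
The plan is to reduce the statement to a rank-one computation governed by the minimal parabolic $Q = P_\alpha \supset B$ with $QD \ne D$, and there to exclude the types $(U)$ and $(N)$ one at a time. The starting observation is structural: since $\chi_H$ is a character of $B$ it is trivial on the unipotent radical $U = B_u$, so the adjoint action of $U$ on the line $\mathrm{Lie}(H)$ is trivial; as $H$ is connected this forces $U$ to \emph{centralize} $H$. Equivalently, the $\GG_a$-action and the $U$-action commute, and the LND $\partial$ attached to $H$ is $U$-invariant, $u\partial u^{-1} = \partial$ for all $u \in U$. This $U$-invariance, together with local nilpotency of $\partial$, is the single constraint I intend to exploit. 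The case of $G$-stable $D$ needs no argument, so I assume $D$ is a color.

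Next I would localize near the generic point of $D$ by the local structure theorem and pass, via the morphism~\eqref{mor_Q_r}, to the rank-one model $\overline Q/\overline Q_z$ with $\overline Q \in \{\SL_2, \mathrm{PSL}_2\}$; concretely I would work on the affine models $\KK^2 = \overline{\SL_2/U}$ for type $(U)$, the space of binary quadratic forms $\KK^3 = \mathrm{Sym}^2 V^*$ (inside which $\SL_2/N(T)$ is open) for type $(N)$, and the two-color space $\SL_2/T$ for type $(T)$. In each case $D$ is cut out by a distinguished $B$-semiinvariant $f_D$ — namely $y$, the coefficient $\alpha$, and $p-q$, respectively — the $B$-eigenfunctions are explicitly known, and the hypothesis $HD \ne D$ becomes the condition $\partial f_D \notin (f_D)$. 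The transfer of the $U$-invariant LND to these models (compatibly with $f_D$ and the valuation $v_D$) is a point that needs care but is routine given $U$-invariance.

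The type $(U)$ model is the template. Writing $\partial = P\,\partial_x + Q\,\partial_y$ on $\KK[x,y]$, the function $\partial y = Q$ must be a $B$-eigenfunction of weight $\chi_H + \epsilon$, hence a scalar multiple of a power of $y$; the hypothesis $\partial y \notin (y)$ then forces $\partial y$ to be a nonzero constant. But $U$-invariance yields $P(x-cy,y) = P(x,y) - c\,\partial y$ for all $c$, and differentiating in $c$ at $c=0$ gives $y\,P_x = \partial y$, which is impossible for a polynomial $P$ when $\partial y \ne 0$. Hence $\partial$ preserves $(f_D)$ and $HD = D$, a contradiction. The same mechanism explains why type $(T)$ is \emph{admissible}: there the relevant representation has two independent semiinvariant directions ($D^+$ and $D^-$), which decouple the $U$-constraint and leave room for $\partial$ to move one color, so no contradiction is produced.

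The main obstacle is type $(N)$, where the orbit count coincides with type $(U)$ but $\overline Q_z = N(\overline T)$, and now $\partial f_D \notin (f_D)$ is not immediately impossible because the $B$-eigenfunctions of $\KK^3$ form $\KK[\alpha, \Delta]$ with $\Delta = \beta^2 - 4\alpha\gamma$ the $\SL_2$-invariant discriminant, a priori permitting $\partial\alpha = c\,\Delta^{k} \notin (\alpha)$. The cleanest route I foresee is to pass to the double cover $\SL_2/T \to \SL_2/N(T)$ induced by $N(\overline T)/\overline T \cong \ZZ/2$, in which the single color $D$ splits into the two colors $D^+, D^-$ exchanged by the Weyl involution $\tau$. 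Since $\GG_a$ is simply connected, the $\GG_a$-action lifts over the étale locus and extends by normality to a $B$-normalized $\GG_a$-action $\widetilde H$ on $\SL_2/T$ that is $\tau$-equivariant. If $\widetilde H$ fixed $D^+$ then, applying $\tau$, it would also fix $D^- = \tau D^+$, hence fix the preimage of $D$ and contradict $HD \ne D$; therefore $\widetilde H$ moves \emph{both} $D^+$ and $D^-$, contradicting the fact quoted above that a $B$-root subgroup moves at most one divisor in $\mathcal D^B$. Making the lifting and the equivariance precise across the ramification is where the argument is most delicate, and it is exactly the mechanism separating the excluded type $(N)$ from the admissible type $(T)$.
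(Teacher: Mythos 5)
Your overall strategy coincides with the paper's — reduce to the rank-one picture at a minimal parabolic $Q$ with $QD \ne D$ and exclude types $(U)$ and $(N)$ separately, using the \'etale double cover for type $(N)$ — and your two exclusion mechanisms are essentially sound. The type-$(U)$ computation ($yP_x = \partial y$ against $\partial y$ a nonzero constant) is a coordinate incarnation of the paper's argument that the $\widetilde B_u$-fixed locus is $H$-stable and $\widetilde B$-stable and would have to be everything; for type $(N)$, your idea of lifting the entire $\GG_a$-action to the cover (using simple connectedness of $\GG_a$) and then invoking the ``at most one moved divisor'' statement of \cite{AA} is a legitimate and arguably cleaner variant of the paper's argument, which lifts only the single orbit $Hy \simeq \mathbb A^1$ and gets a contradiction from an element $b \in \widetilde B$ interchanging the two sheets.

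The genuine gap is in the reduction, which you declare ``routine''. First, the $\GG_a$-action does \emph{not} descend along the morphism~\eqref{mor_Q_r}, i.e.\ along the quotient by the full solvable radical $Q_r$: the fibres of that map are the $Q_r$-orbits, and while $H$ does commute with $Q_u$ (your correct observation that $\chi_H$ is trivial on $B_u$), it is only \emph{normalized}, with a generally nontrivial character, by the torus part of $Q_r$; hence an individual element $h_t \in H$ sends a $Q_r$-orbit $Q_r x$ to the set $\bigcup_{q} q\,h_{t\chi_H(q)^{-1}}x$, which is not a single $Q_r$-orbit. This is exactly why the paper descends only modulo $Q_u$, to $\widetilde Q/\widetilde Q_z$; your models $\KK^2$, $\mathrm{Sym}^2 V^*$, $\SL_2/T$ must be replaced by versions retaining the central torus of the Levi (and must also cover the cases where $\overline Q_z$ strictly contains a maximal unipotent subgroup, e.g.\ the $\PP^1$ case, which your $\KK^2$ model misses). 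Second, and more fundamentally, before anything can be restricted or descended you need to know that the $Q$-orbit $Qz$ is $H$-stable and how the $H$-orbits sit relative to $O_2 = Qz \cap D$ (each is an $\mathbb A^1$ meeting $O_2$ in one point); this is precisely the input from \cite[\S\,1.5]{AA} on which the paper's proof rests, and your proposal neither quotes it nor proves a substitute. Your appeal to the local structure theorem does not supply it either: for a color of type $(U)$ or $(N)$ the conclusion of Proposition~\ref{prop_stabilizer} fails for $\mathcal F = \mathcal D \setminus \lbrace D \rbrace$, so the derived subgroup of the Levi need not act trivially on the slice and the chart is not the rank-one model you compute on. Both defects are repairable, but as written the transfer step — the actual core of the proof — is missing.
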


\begin{proof}
Assume that $D$ is a color of type $(U)$ or~$(N)$ and choose a minimal parabolic subgroup $Q \supset B$ satisfying $QD \ne D$.
Then, in the notation of~\S\,\ref{S_types_of_colors}, the orbit $Qz$ splits into two $B$-orbits $O_1 = Bz$ and $O_2 = O \cap D$.
In this case, it follows from the discussion in~\cite[\S\,1.5]{AA} that the set $Qz$ is $H$-stable and moreover each $H$-orbit in $Qz$ is isomorphic to the affine line $\mathbb A^1$ and meets $O_2$ in exactly one point.
For every subgroup $F \subset Q$, let $\widetilde F$ denote its image in $Q/Q_u$, where $Q_u$ is the unipotent radical of~$Q$.
Similarly to~(\ref{mor_Q_r}), the morphism
\begin{equation}
\varphi \colon Qz \simeq Q / Q_z \to Q / (Q_zQ_u) \simeq \widetilde{Q} / \widetilde{Q}_z
\end{equation}
of factorizing by~$Q_u$ induces a codimension-preserving bijection between $B$-orbits in $Qz$ and $\widetilde B$-orbits in~$\widetilde Q/\widetilde Q_z$.
Since the actions of $H$ and $Q_u$ on $Qz$ commute, the former descends to a nontrivial action of~$H$ on $\widetilde Q/\widetilde Q_z$ normalized by the action of~$\widetilde B$.
In particular, there are exactly two $\widetilde B$-orbits $\varphi(O_1)$, $\varphi(O_2)$ in $\widetilde Q / \widetilde Q_z$ and every $H$-orbit is isomorphic to $\mathbb A^1$ and meets $\varphi(O_2)$ in exactly one point.
Further we consider the cases of types $(U)$ and $(N)$ separately.
The condition on each of these types is reformulated in view of the fact that $\overline Q$ is the quotient of $\widetilde Q$ by its connected center.

Type~$(U)$: $\widetilde Q_z$ contains a maximal unipotent subgroup in~$\widetilde Q$.
Then $\widetilde Q/\widetilde Q_z$ has a point fixed by the unipotent radical $\widetilde B_u$ of~$\widetilde B$.
Since $\widetilde B_u$ is normalized by $\widetilde B$ and commutes with the action of~$H$, the set of $\widetilde B_u$-fixed points in $\widetilde Q/\widetilde Q_z$ is stable with respect to both groups $\widetilde B$ and~$H$ and hence coincides with the whole~$\widetilde Q/\widetilde Q_z$.
Consequently, $\widetilde B_u$ acts trivially on~$\widetilde Q/\widetilde Q_z$, a contradiction.

Type~$(N)$: $\widetilde Q_z$ contains a subgroup $\widetilde Q'_z$ of index~$2$ that is the preimage of the connected component of the identity in~$\overline Q_z$.
Then the natural morphism $\psi \colon \widetilde Q / \widetilde Q'_z \to \widetilde Q / \widetilde Q_z$ is an unramified two-fold covering.
Moreover, the set $\psi^{-1}(\varphi(O_1))$ is an open $\widetilde B$-orbit in $\widetilde Q/\widetilde Q'_z$ and the set $\psi^{-1}(\varphi(O_2))$ splits into two $\widetilde B$-orbits of codimension~$1$, which we denote by $D_1$ and~$D_2$.
Now let $y = \varphi(z)$ and $\psi^{-1}(y) = \lbrace y_1, y_2 \rbrace$.
Since $Hy \simeq \mathbb A^1$, the set $\psi^{-1}(Hy)$ is a disjoint union of two components $Y_1$ and~$Y_2$, each of which maps isomorphically onto~$Hy$.
Without loss of generality we assume that $y_i \in Y_i$ for $i=1,2$.
Let $b \in \widetilde B$ be such that $by_1 = y_2$.
Then $b \in \widetilde B_z$ and hence $by_2 = y_1$.
As $Hy$ is $\widetilde B$-stable, the action of~$b$ interchanges~$Y_1$ and~$Y_2$.
On the other hand, the set $\psi^{-1}(Hy \cap \varphi(O_2))$ consists of two points belonging to different $\widetilde B$-orbits, a contradiction.
\end{proof}

In the terminology of~\cite[\S\,4.2]{AA}, a $B$-root subgroup $H$ on~$X$ is called \textit{vertical} if it preserves the open $B$-orbit and \textit{horizontal} otherwise.
If $H$ is horizontal and $HD \ne D$ for some $D \in \mathcal D^B$ then we say that $H$ \textit{moves}~$D$.
In accordance with Proposition~\ref{UN_colors}, horizontal $B$-root subgroups can be divided into two types.

\begin{definition}
Let $H$ be a horizontal $B$-root subgroup on~$X$ and let $D \in \mathcal D^B$ be such that $HD \ne D$.
If $D \in \mathcal D^G$ then we call $H$ \textit{toroidal}.
If $D$ is a color of type~$(T)$ then we call $H$ \textit{blurring}.
\end{definition}

\section{}

For every subset $\mathcal F \subset \mathcal D$, put $D_{\mathcal F} = \bigcup_{D \in \mathcal F} D$, $X_{\mathcal F} = X \setminus D_{\mathcal F}$ and let $P_\mathcal F$ denote the stabilizer in $G$ of the set~$X_{\mathcal F}$.
Then $P_\mathcal F$ is a parabolic subgroup of~$G$ containing~$B$.
In our subsequent considerations, a key role is played by the local structure theorem (see~\cite[Thm.~2.3, Prop.~2.4]{asymp}, \cite[Thm.~1.4]{BLV}), which in our situation may be stated as follows.

\begin{theorem} \label{lst}
Suppose $\mathcal F \subset \mathcal D$ is an arbitrary subset and $P = L \rightthreetimes P_u$ is a Levi decomposition of the group $P = P_\mathcal F$.
Then there exists a closed $L$-stable subvariety $Z \subset X_\mathcal F$ such that the map $P_u \times Z \to X_\mathcal F$ given by the formula $(p,z) \mapsto pz$ is a $P$-equivariant isomorphism, where the action of~$P$ on $P_u \times Z$ is defined by $lu(p,z) = (lupl^{-1}, lz)$ for all $l \in L$, $u,p \in P_u$, $z \in Z$.
Moreover, if $P$ coincides with the stabilizer of the open $B$-orbit in~$X$ then the derived subgroup of~$L$ acts trivially on~$Z$.
\end{theorem}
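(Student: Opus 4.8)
The plan is to build the slice $Z$ as the fixed-point locus of a suitable one-parameter subgroup and to recognize $X_\mathcal F$ as the associated attracting cell, following the local structure theorems of \cite{BLV} and \cite{asymp}. First I record the structural facts. Each color is $B$-stable, so $D_\mathcal F$ is $B$-stable and $P = P_\mathcal F \supseteq B$ is parabolic; being connected, $P$ stabilizes every irreducible component of $D_\mathcal F$ and hence acts on $X_\mathcal F$, inside which the open $B$-orbit $\Omega$ stays open and dense. I then fix a one-parameter subgroup $\lambda \colon \GG_m \to G$ with $P = P(\lambda) = \lbrace g : \lim_{t \to 0} \lambda(t) g \lambda(t)^{-1} \text{ exists} \rbrace$, so that the chosen Levi is $L = C_G(\lambda)$ and $P_u = \lbrace g : \lim_{t \to 0} \lambda(t) g \lambda(t)^{-1} = e \rbrace$; note $P_u \subseteq B$.

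Next I construct the slice and the comparison map. Since $D_\mathcal F$ is a $P$-stable divisor, the associated divisorial sheaf $\mathcal O_X(D_\mathcal F)$ carries a $P$-linearization together with a canonical $P$-semiinvariant section $s$ of some weight $\chi$ whose zero locus is $D_\mathcal F$, so that $X_\mathcal F$ is exactly the non-vanishing locus of $s$; choosing $\lambda$ dominant arranges $\langle \chi, \lambda \rangle \ge 0$, which will control the signs of the $\lambda$-weights on $X_\mathcal F$. I set $Z := X_\mathcal F^{\lambda}$, the fixed-point locus of $\lambda$; it is closed and, because $L$ centralizes $\lambda$, it is $L$-stable. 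The candidate isomorphism is the action map $\mu \colon P_u \times Z \to X_\mathcal F$, $(p,z) \mapsto pz$, and a one-line computation with the normality of $P_u$ in $P$ shows it is $P$-equivariant for the twisted action in the statement.

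It remains to prove that $\mu$ is an isomorphism, which I would organize in three steps: (i) $P_u$ acts freely on $X_\mathcal F$; (ii) $\mu$ is étale along $Z$, hence an isomorphism onto the open subset $P_u \cdot Z$; and (iii) $P_u \cdot Z = X_\mathcal F$. Freeness on $\Omega$ is a dimension count, since sphericity gives $\dim \Omega = \dim X$ and $B = (B \cap L) \ltimes P_u$, forcing trivial $P_u$-stabilizers on the open orbit. For (ii) I would use the eigensection: along $Z$ the section $s$ of weight $\chi$ rules out negative $\lambda$-weights on the normal directions, so the tangent space at a smooth point $z_0 \in Z \cap \Omega$ splits as $T_{z_0}Z \oplus \mathfrak p_u \cdot z_0$ and $d\mu$ is an isomorphism there. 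The essential point, and the main obstacle, is step (iii): here the choice of $\mathcal F$ is decisive, since removing exactly the divisors of $\mathcal F$ is what eliminates the points with nontrivial $P_u$-stabilizer and what guarantees that the big cell exhausts $X_\mathcal F$. Concretely I would show that the complement of the open subset $P_u \cdot Z$ contains no $B$-stable prime divisor, because any such divisor is a color outside $\mathcal F$ or a $G$-stable divisor and therefore already meets the big cell through its trace on $Z$; this divisor-chasing is exactly the content of \cite[Thm.~2.3, Prop.~2.4]{asymp}.

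Finally, for the additional assertion I assume $P = P_\mathcal F$ equals the stabilizer of $\Omega$. Then $\Omega$ is a single $P$-orbit, so under $\mu$ it corresponds to $P_u \times (L z_0)$ for a point $z_0 \in Z \cap \Omega$; since $\Omega = B z_0$ is moreover a single $B$-orbit and $B = (B \cap L) \ltimes P_u$, unwinding the twisted action identifies $\Omega$ with $P_u \times (B \cap L) z_0$, whence the open $L$-orbit $L z_0$ is in fact a single $(B \cap L)$-orbit. An $L$-homogeneous space on which a Borel subgroup acts transitively is a quotient of a torus, so $(L,L)$ fixes $z_0$; as $L z_0$ is dense in $Z$, the derived subgroup of $L$ acts trivially on $Z$, as claimed.
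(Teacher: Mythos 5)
The paper itself does not prove Theorem~\ref{lst}; it imports it from \cite[Thm.~2.3, Prop.~2.4]{asymp} and \cite[Thm.~1.4]{BLV}, so your sketch has to stand on its own. It does not: the construction of the slice is wrong at the outset. You set $Z := X_{\mathcal F}^{\lambda}$, the fixed-point locus of a one-parameter subgroup $\lambda$ with $P(\lambda)=P$. But the slice asserted by the theorem is a multiplicity-free $T$-variety with a \emph{nontrivial} action of the connected centre of $L$ --- indeed \S\,\ref{sect_6} of this paper relies on $Z$ having an open $T$-orbit --- whereas $\lambda(\GG_m)$ is central in $L$ and would act trivially on your $Z$. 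Concretely, take $G=\SL_2$ acting on $X=\KK^2$, $\mathcal F=\mathcal D=\lbrace x_2=0\rbrace$, so $X_{\mathcal F}=\lbrace x_2\ne 0\rbrace$, $P=B$, $L=T$, $P_u=U$; the theorem holds with $Z=\lbrace(0,c): c\ne 0\rbrace\simeq\GG_m$, on which $T$ acts nontrivially, while any $\lambda$ with $P(\lambda)=B$ is a positive multiple of the coroot and satisfies $X_{\mathcal F}^{\lambda}=\varnothing$ (the fixed point of $\lambda$ on $\KK^2$ is the origin, which lies on the removed divisor). So your candidate $Z$ is empty or too small in general, and steps (ii) and (iii), which are built on it, collapse; this is not a Bia{\l}ynicki-Birula attracting-cell situation. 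In the cited proofs, $Z$ is instead obtained from the \emph{linear model}: one takes the $G$-module $V$ generated by the canonical $B$-eigensection $s$ cutting out $D_{\mathcal F}$, the induced morphism $X_{\mathcal F}\to\lbrace \ell\in V^* : \ell(s)=1\rbrace$, and defines $Z$ as the preimage of an $L$-stable affine subspace complementary to the $\mathfrak p_u$-directions (equivalently, as the common zero locus on $X_{\mathcal F}$ of the functions $(\xi\cdot s)/s$ for $\xi$ in the opposite nilradical); the product decomposition is then proved in the linear model first and pulled back.

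Two secondary points. Your step (iii) argues that the complement of $P_u\cdot Z$ contains no $B$-stable prime divisor ``because any such divisor already meets the big cell through its trace on $Z$'' --- this presupposes the description of $Z$ you are trying to establish, and you would also need to justify that the complement of $P_u\cdot Z$ in $X_{\mathcal F}$ is pure of codimension one (true if $P_u\cdot Z$ is affine open in the normal variety $X_{\mathcal F}$, but that must be said). For the final assertion, your reduction to ``$Lz_0=(B\cap L)z_0$ forces $(L,L)\subset L_{z_0}$'' is correct in substance, but the key lemma (a connected reductive group whose Borel acts transitively on a homogeneous space acts on it through a torus quotient) is exactly the nontrivial content of that part of \cite{BLV} and cannot simply be asserted.
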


Below we shall need the following observation.

\begin{proposition} \label{prop_stabilizer}
Suppose $\mathcal F = \mathcal D$ or $\mathcal F = \mathcal D \setminus \lbrace D_0 \rbrace$ where $D_0$ is a color of type~$(T)$.
Then the group $P_\mathcal F$ coincides with the stabilizer of the open $B$-orbit in~$X$.
\end{proposition}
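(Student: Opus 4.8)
The plan is to reduce the statement to a combinatorial comparison of the minimal parabolic subgroups contained in the two groups. First I would observe that the stabilizer $S$ of the open $B$-orbit contains $B$ and is therefore a parabolic subgroup, while $P_{\mathcal F}$ is parabolic and contains $B$ by the remark preceding Theorem~\ref{lst}; both are consequently generated by $B$ together with the minimal parabolic subgroups $Q \supset B$ that they contain. Thus everything comes down to showing, for each such $Q$, that $Q \subset P_{\mathcal F}$ is equivalent to $Q \subset S$. To this end I would record two criteria, writing $z$ for a point of the open $B$-orbit as in~\S\,\ref{S_types_of_colors}. Membership $Q \subset P_{\mathcal F}$ amounts to $Q$ stabilizing the divisor $D_{\mathcal F}$; since $Q$ is connected and each color is irreducible, for a color $D$ one has either $QD = D$ or $\overline{QD} = X$, so $Q$ stabilizes $D_{\mathcal F}$ exactly when $QD = D$ for all $D \in \mathcal F$. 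Membership $Q \subset S$ means $Qz = Bz$, and using~\S\,\ref{S_types_of_colors} this holds exactly when $Q$ moves no color: if $QD \ne D$ then $Qz$ meets $D$ and hence strictly contains $Bz$, while if $Qz \ne Bz$ then $Qz \setminus Bz$ contains a codimension-one $B$-orbit of the shape $Qz \cap D$ for a color $D$ moved by~$Q$. In particular $S = P_{\mathcal D}$, which disposes of the case $\mathcal F = \mathcal D$.

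For $\mathcal F = \mathcal D \setminus \lbrace D_0 \rbrace$ it then remains to prove $P_{\mathcal F} = P_{\mathcal D}$. The inclusion $P_{\mathcal D} \subset P_{\mathcal F}$ is formal from the first criterion, since fixing every color certainly fixes every color other than~$D_0$. For the reverse I would argue by contradiction, assuming that some minimal parabolic $Q \supset B$ lies in $P_{\mathcal F}$ but not in $P_{\mathcal D}$. Then $Q$ fixes every color of $\mathcal F$ yet moves some color, so the unique color it moves is $D_0$; in particular $QD_0 \ne D_0$.

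The hard part is the use of the hypothesis that $D_0$ has type~$(T)$, and this is where I expect the only real content to lie. Because the type of a color does not depend on the minimal parabolic used to move it, the set $Qz \setminus Bz$ is of type~$(T)$ and therefore splits into two codimension-one $B$-orbits, one being $Qz \cap D_0$ and the other being $Qz \cap D_0'$ for a second color $D_0'$. Here I would check that $D_0'$ is indeed a color distinct from $D_0$ and genuinely moved by~$Q$: the two $B$-orbits are distinct, so their closures are distinct prime divisors; both lie in the open $G$-orbit since $Qz$ does, so both closures are colors; and the same reasoning as before shows $QD_0' \ne D_0'$ because $Qz$ meets $D_0'$ outside $Bz$. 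Since $D_0' \in \mathcal F$, this contradicts $Q \subset P_{\mathcal F}$. Hence no such $Q$ exists, so $P_{\mathcal F} \subset P_{\mathcal D}$ and the two groups coincide. Apart from this type-$(T)$ pairing, every step is a formal manipulation of the membership criteria.
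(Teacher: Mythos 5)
Your proof is correct and follows essentially the same route as the paper's: the case $\mathcal F = \mathcal D$ is immediate, and for $\mathcal F = \mathcal D \setminus \lbrace D_0 \rbrace$ the key point --- that a minimal parabolic $Q \supset B$ moving the type-$(T)$ color $D_0$ must also move a second, distinct color lying in $\mathcal F$, so that $Q \subset P_{\mathcal F}$ forces $QD_0 = D_0$ --- is exactly the paper's argument, combined with the fact that $P_{\mathcal F}$ is generated by the minimal parabolic subgroups it contains. You merely spell out the type-$(T)$ pairing and the identification of $P_{\mathcal D}$ with the stabilizer of the open $B$-orbit in more detail than the paper does.
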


\begin{proof}
If $\mathcal F = \mathcal D$ then the assertion is obvious, therefore in what follows we assume $\mathcal F = \mathcal D \setminus \lbrace D_0 \rbrace$ for a color $D_0$ of type~$(T)$.
Let $Q \supset B$ be an arbitrary minimal parabolic subgroup of~$G$.
Then the condition $QD_0 \ne D_0$ can hold only if $QD' \ne D'$ for some color $D' \in \mathcal D \setminus \lbrace D_0 \rbrace$.
So, if $Q \subset P_\mathcal F$ then $QD_0 = D_0$.
Since $P_\mathcal F$ is generated as a group by all minimal parabolic subgroups contained in it, we obtain $P_\mathcal F D_0 = D_0$, which yields the required result.
\end{proof}

\section{} \label{sect_notation}

Starting from this point and till the end of the paper we assume that $X$ is an affine spherical $G$-variety.
We now introduce some notation.

Fix a maximal torus $T \subset B$ and let $\mathfrak X(T)$ denote its character lattice.
Let $\Delta \subset \mathfrak X(T)$ be the root system of $G$ with respect to~$T$ and let $\Lambda^+ \subset \mathfrak X(T)$ be the monoid of dominant weights with respect to~$B$.

Let $M$ (resp.~$\Gamma$) be the lattice (resp. monoid) of weights of $B$-semiinvariant rational (resp. regular) functions on~$X$.
Since $X$ is affine, we have $M = \ZZ\Gamma$ (see, for instance,~\cite[Prop.~5.14]{Tim}).
Consider the dual lattice $N = \Hom_\ZZ(M,\ZZ)$ and the corresponding rational vector space $N_\QQ = N \otimes_\ZZ \QQ$.
Let $\langle \cdot\,, \cdot \rangle \colon N \times M \to \ZZ$ be the natural pairing.

Since $X$ contains an open $B$-orbit, for every $\lambda \in M$ there exists a unique up to proportionality $B$-semiinvariant rational function $f_\lambda$ on~$X$ of weight~$\lambda$.
Requiring all such functions to take the value~$1$ at a fixed point of the open $B$-orbit, we shall assume that $f_\lambda f_\mu = f_{\lambda + \mu}$ for all $\lambda, \mu \in M$.
Every $D \in \mathcal D^B$ defines an element $\varkappa(D) \in N$ by the formula $\langle \varkappa(D), \lambda \rangle = \ord_D(f_\lambda)$ for all $\lambda \in M$.
Thanks to the normality of~$X$ we have
\begin{equation} \label{eqn_Gamma}
\Gamma = \lbrace \lambda \in M \mid \langle \varkappa(D), \lambda \rangle \ge 0 \ \text{for all} \ D \in \mathcal D^B \rbrace.
\end{equation}
In particular, the set $\lbrace \varkappa(D) \mid D \in \mathcal D^B \rbrace$ generates a strictly convex cone in~$N_\QQ$.

For each strictly convex finitely generated cone $\mathcal C \subset N_\QQ$ let $\mathcal C^1$ denote the set of primitive elements $\rho$ of the lattice~$N$ such that the ray $\QQ_{\ge0}\rho$ is a face of~$\mathcal C$.
For every $\rho \in \mathcal C^1$ define the set
\begin{equation}
\mathfrak R_\rho(\mathcal C) = \lbrace \mu \in M \mid \langle \rho, \mu \rangle = -1; \ \langle \rho',\mu\rangle \ge 0 \ \text{for all} \ \rho' \in \mathcal C^1 \setminus \lbrace \rho \rbrace \rbrace.
\end{equation}
Elements of the set $\mathfrak R(\mathcal C) = \bigsqcup_{\rho \in \mathcal C^1} \mathfrak R_\rho(\mathcal C)$ are called \textit{Demazure roots} of the cone~$\mathcal C$.
Put
\begin{equation}
\Gamma(\mathcal C) = \lbrace \lambda \in M \mid \langle x,\lambda \rangle \ge 0 \ \text{for all} \ x \in \mathcal C \rbrace
\end{equation}
and consider the algebra $A(\mathcal C) = \bigoplus_{\lambda \in \Gamma(\mathcal C)} \KK f_\lambda$.
Below we shall need the following well-known result (see~~\cite[Thm.~2.7]{L1}) providing a description of all $T$-normalized LNDs on~$A(\mathcal C)$.

\begin{theorem} \label{thm_LND_toric}
Let $\mathcal C \subset N_\QQ$ be an arbitrary strictly convex finitely generated cone.
\begin{enumerate}[label=\textup{(\alph*)},ref=\textup{\alph*}]
\item
The set of weights of all $T$-normalized LNDs on $A(\mathcal C)$ equals $\mathfrak R(\mathcal C)$.

\item
For each $\rho \in \mathcal C^1$ and each $\mu \in \mathfrak R_\rho(\mathcal C)$ there exists a unique up to proportionality $T$-normalized LND $\partial_\mu$ on $A(\mathcal C)$ of weight~$\mu$, which is defined by the formula
\begin{equation} \label{eqn_LND_toric}
\partial_\mu(f_\lambda) = \langle \rho, \lambda \rangle f_{\lambda}f_\mu
\end{equation}
for all $\lambda \in \Gamma(\mathcal C)$.
\end{enumerate}
\end{theorem}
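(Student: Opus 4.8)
The plan is to exploit the $M$-grading on $A(\mathcal C)$, for which a $T$-normalized LND is exactly a homogeneous locally nilpotent derivation, and to convert the problem into linear algebra on $N$ and~$M$. First I would extend a homogeneous derivation $\partial$ of degree~$\mu$ to the localization $\KK[M]=\bigoplus_{\lambda\in M}\KK f_\lambda$, in which every $f_\lambda$ is invertible. There Leibniz' rule applied to $f_\lambda f_{\lambda'}=f_{\lambda+\lambda'}$ shows that $\lambda\mapsto c(\lambda)$, defined by $\partial(f_\lambda)=c(\lambda)f_{\lambda+\mu}$, is additive on all of~$M$, hence $c=\langle s,\cdot\rangle$ for a unique $s\in N\otimes_\ZZ\KK$. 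Thus every homogeneous derivation of degree~$\mu$ is of the form $\partial_s\colon f_\lambda\mapsto\langle s,\lambda\rangle f_{\lambda+\mu}$, and the theorem reduces to determining for which pairs $(s,\mu)$ the operator $\partial_s$ preserves $A(\mathcal C)$ and is locally nilpotent. For the existence statement in~(b) and the inclusion $\mathfrak R(\mathcal C)\subseteq\lbrace\text{weights}\rbrace$ I would just check that $s=\rho$ works when $\mu\in\mathfrak R_\rho(\mathcal C)$: invariance of $A(\mathcal C)$ follows ray by ray from $\langle\rho,\mu\rangle=-1$ and $\langle\rho',\mu\rangle\ge0$ (using integrality of $\langle\rho,\lambda\rangle$ on the ray~$\rho$ itself), while local nilpotence follows from the telescoping identity $\partial_\rho^{\,k}(f_\lambda)=\bigl(\prod_{j=0}^{k-1}(\langle\rho,\lambda\rangle-j)\bigr)f_{\lambda+k\mu}$, whose coefficient vanishes as soon as $k>\langle\rho,\lambda\rangle$. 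This also yields formula~(\ref{eqn_LND_toric}).

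For the converse I would start from an arbitrary nonzero homogeneous LND $\partial=\partial_s$ of degree~$\mu$ and use two facts: \emph{invariance} (for every $\lambda\in\Gamma(\mathcal C)$ with $\langle s,\lambda\rangle\neq0$ one has $\lambda+\mu\in\Gamma(\mathcal C)$) and \emph{local nilpotence}. I would first rule out the possibility that $\langle\rho',\mu\rangle\ge0$ for all $\rho'\in\mathcal C^1$, i.e.\ $\mu\in\Gamma(\mathcal C)$. In that case every forward orbit $\lambda,\lambda+\mu,\lambda+2\mu,\dots$ stays in the monoid $\Gamma(\mathcal C)$, so local nilpotence forces $\langle s,\lambda\rangle+j\langle s,\mu\rangle=0$ to be solvable in some $j\ge0$ for every~$\lambda$; a short analysis of the sign of $\langle s,\mu\rangle$, using that $\Gamma(\mathcal C)$ spans~$M$ and contains~$\mu$, gives $s=0$, a contradiction. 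Hence there exists a ray $\rho\in\mathcal C^1$ with $\langle\rho,\mu\rangle<0$.

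It then remains to pin down $s$ and the value $\langle\rho,\mu\rangle$. The crucial lemma I would prove is that any ray $\rho'\in\mathcal C^1$ with $s\notin\KK\rho'$ satisfies $\langle\rho',\mu\rangle\ge0$: the facet $\Gamma(\mathcal C)\cap(\rho')^{\perp}$ spans the hyperplane $(\rho')^{\perp}$, so $s\notin\KK\rho'$ produces a lattice point $\lambda_0$ in this facet with $\langle s,\lambda_0\rangle\neq0$, and then invariance gives $\lambda_0+\mu\in\Gamma(\mathcal C)$, whence $\langle\rho',\mu\rangle=\langle\rho',\lambda_0+\mu\rangle\ge0$. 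Consequently every ray with negative $\mu$-pairing is proportional to~$s$; strict convexity of~$\mathcal C$ allows at most one such ray, so the ray~$\rho$ above is unique and $s\in\KK\rho$, which already gives $\partial\propto\partial_\rho$ and hence the uniqueness in~(b). To obtain $\langle\rho,\mu\rangle=-1$ I would exhibit $\lambda\in\Gamma(\mathcal C)$ with $\langle\rho,\lambda\rangle=1$: choosing $\nu\in M$ with $\langle\rho,\nu\rangle=1$ (primitivity of~$\rho$) and a lattice point $\xi$ in the relative interior of the facet dual to~$\rho$, the point $\lambda=\nu+K\xi$ lies in $\Gamma(\mathcal C)$ for $K\gg0$ and pairs to~$1$ with~$\rho$; invariance then forces $1=\langle\rho,\lambda\rangle\ge-\langle\rho,\mu\rangle$, so $\langle\rho,\mu\rangle=-1$. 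Combined with the inequalities for the remaining rays, this is precisely the statement $\mu\in\mathfrak R_\rho(\mathcal C)$, completing~(a).

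The main obstacle is this converse direction: turning the two rather flexible conditions—invariance and local nilpotence—into the rigid combinatorial conclusion that a single ray is responsible for~$\partial$ and that it meets~$\mu$ with multiplicity exactly~$-1$. The two technical pressure points are the production of a facet lattice point outside $\Ker\langle s,\cdot\rangle$, which collapses $s$ onto one ray, and the construction of a point of $\Gamma(\mathcal C)$ meeting~$\rho$ with multiplicity~$1$ while remaining non-negative on every other ray, which fixes the value~$-1$. Both rest on the full-dimensionality of the dual cone (equivalently, strict convexity of~$\mathcal C$) together with primitivity of the ray generators, so I would make sure these hypotheses are used cleanly rather than through case-by-case verification.
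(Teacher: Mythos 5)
The paper does not prove this theorem at all: it is imported as a known result, with a reference to Liendo's paper \cite{L1} (Theorem~2.7 there), so there is no internal proof to compare yours against. That said, your argument is a correct and essentially complete reconstruction of the standard Demazure--Liendo proof. The reduction of a homogeneous degree-$\mu$ derivation to the form $f_\lambda\mapsto\langle s,\lambda\rangle f_{\lambda+\mu}$ with $s\in N\otimes_{\ZZ}\KK$ via Leibniz on the localization $\KK[M]$ is sound (using that $\Gamma(\mathcal C)$ generates $M$, which needs strict convexity of $\mathcal C$); the existence direction with $s=\rho$ and the telescoping factorial is exactly right; and the two ``pressure points'' you identify in the converse are handled correctly: the facet $\mathcal C^\vee\cap(\rho')^\perp$ is $(\rk M-1)$-dimensional precisely because $\mathcal C$ is strictly convex, which forces $s$ onto a single ray, and the lattice point $\nu+K\xi$ pins down $\langle\rho,\mu\rangle=-1$ by integrality. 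One small simplification: in ruling out $\mu\in\Gamma(\mathcal C)$ you do not need a sign analysis of $\langle s,\mu\rangle$ --- applying local nilpotence to $f_\mu$ itself gives $(1+j)\langle s,\mu\rangle=0$ for some $j\ge0$, hence $\langle s,\mu\rangle=0$ in characteristic zero, and then $\langle s,\lambda\rangle=0$ for all $\lambda\in\Gamma(\mathcal C)$ forces $s=0$.
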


\section{} \label{sect_6}

Suppose $H$ is a $B$-root subgroup on~$X$ and $\mathcal F_H = \lbrace D \in \mathcal D \mid HD=D \rbrace$.
Then $H$ preserves the open subset $X_{\mathcal F_H} \subset X$ and thereby defines a $B$-normalized LND on the algebra $\KK[X_{\mathcal F_H}]$.
Note that $\mathcal F_H = \mathcal D$ in the case of vertical or toroidal $H$ and $\mathcal F_H = \mathcal D \setminus \lbrace D_0 \rbrace$ in the case of blurring $H$ moving a color $D_0$ of type~$(T)$.

Now let $\mathcal F = \mathcal D$ or $\mathcal F = \mathcal D \setminus \lbrace D_0 \rbrace$ with $D_0 \in \mathcal D$ being a color of type~$(T)$.
Our goal in this section is to describe all $B$-normalized LNDs on the algebra $\KK[X_\mathcal F]$.

Apply Theorem~\ref{lst} and retain the notation $Z,L,P_u$ used in that theorem.
Then there is a $P$-equivariant isomorphism $X_\mathcal F \simeq P_u \times Z$, via which we shall identify these two varieties below.
Without loss of generality we shall assume $L \supset T$.
In view of Proposition~\ref{prop_stabilizer}, the derived subgroup of~$L$ acts trivially on~$Z$.
Since $X_\mathcal F$ contains an open $B$-orbit, the variety $Z$ contains an open $T$-orbit, which will be denoted by~$Z_0$.
Fix also an arbitrary point $z_0 \in Z_0$.
Let $L_0$ denote the kernel of the action of $L$ on~$Z$ and put $T_0 = T \cap L_0$.
Note that $M$ consists of exactly those characters of~$T$ that restrict trivially to~$T_0$.

For every $\lambda \in M$, the restriction of $f_\lambda$ to the subvariety~$Z$ is a $T$-semiinvariant rational function, which will be still denoted by~$f_\lambda$.
Then $\KK[Z] = \bigoplus_{\lambda \in \Gamma_Z} \KK f_\lambda$ where \begin{equation} \label{eqn_Gamma_Z}
\Gamma_Z = \lbrace \lambda \in M \mid \langle \varkappa(D), \lambda \rangle \ge 0 \ \text{for all} \ D \in \mathcal D^B \setminus \mathcal F \rbrace.
\end{equation}
Without loss of generality we shall assume that $f_\lambda(z_0) = 1$ for all $\lambda \in M$.

Consider the adjoint representation of the group $L$ on the space $\mathfrak p_u = \mathrm{Lie}(P_u)$ and decompose $\mathfrak p_u$ into a direct sum of irreducible $L$-invariant subspaces.
It is well known (see~\cite[Thm.~0.1]{KOS}) that all summands in this decomposition are pairwise non-isomorphic as $L$-modules; let $\Omega \subset \Delta$ be the set of highest weights of these summands with respect to the Borel subgroup $B \cap L \subset L$.
For each $\alpha \in \Omega$ fix a nonzero vector $e_\alpha \in \mathfrak p_u$ of weight~$\alpha$.
The action of $P_u$ on itself by multiplication on the right induces an action of the Lie algebra~$\mathfrak p_u$ on the algebra $\KK[P_u]$; for each $\alpha \in \Omega$ let $\delta_\alpha$ be the derivation of $\KK[P_u]$ determined by the action of~$e_\alpha$.
This derivation is $B$-normalized of weight~$\alpha$, automatically locally nilpotent, and corresponds to the action of the group $\lbrace \exp(te_\alpha) \mid t \in \KK \rbrace$ on $P_u$ by multiplication on the right.
We shall regard $\delta_\alpha$ as an LND on the whole algebra $\KK[P_u \times Z] \simeq \KK[P_u] \otimes_{\KK} \KK[Z]$ by putting $\delta_\alpha(\KK[Z]) = 0$.

For each character $\mu \in \mathfrak X(T)$ put
\begin{equation}
\Omega_\mu = \lbrace \alpha \in \Omega \mid \left.\mu\right|_{T_0} = \left.\alpha\right|_{T_0} \rbrace; \quad \Omega_\mu^0 : = \lbrace \alpha \in \Omega_\mu \mid \mu - \alpha \in \Gamma_Z \rbrace.
\end{equation}
Note that the condition
$\left.\mu\right|_{T_0} = \left.\alpha\right|_{T_0}$ is equivalent to $\mu - \alpha \in M$.

\begin{theorem} \label{thm_LNDs_on_X0}
Every $B$-normalized LND of weight $\mu$ on the algebra $\KK[P_u \times Z]$ has the form
\begin{equation} \label{eqn_LNDs_on_X0}
\sum_{\alpha \in \Omega_\mu^0} c_\alpha f_{\mu - \alpha} \delta_\alpha + \partial_Z
\end{equation}
where $c_\alpha \in \KK$ and $\partial_Z$ is a $T$-normalized LND of weight $\mu$ on~$\KK[Z]$ extended trivially to~$\KK[P_u]$.
Conversely, every derivation on $\KK[P_u \times Z]$ of the above form is $B$-normalized of weight~$\mu$ and locally nilpotent.
\end{theorem}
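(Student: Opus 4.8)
The plan is to handle the two directions separately, isolating the two ingredients $\partial_Z$ and the $\delta_\alpha$-part of a given LND by exploiting the different pieces of the Borel subgroup in its decomposition $B = (B\cap L)\rightthreetimes P_u$ compatible with the local structure theorem.

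For the direct implication, let $\partial$ be a $B$-normalized LND of weight $\mu$ on $A := \KK[P_u\times Z] = \KK[P_u]\otimes_\KK\KK[Z]$. First I would use that $P_u\subseteq B$ is unipotent, so the character $\mu$ restricts trivially to it; hence $\partial$ \emph{commutes} with the action of $P_u$ by left translations on the first factor. Since $\KK[Z]$ is precisely the subalgebra $A^{P_u}$ of functions invariant under these translations, $\partial$ preserves $\KK[Z]$, and its restriction $\partial_Z := \partial|_{\KK[Z]}$ is a $T$-normalized LND of weight $\mu$ on $\KK[Z]$. Extending $\partial_Z$ trivially to $\KK[P_u]$ yields $\mathrm{id}\otimes\partial_Z$, which one checks is again $B$-normalized of weight $\mu$ (here $(B\cap L)_u$ acts trivially on $\KK[Z]$ because the derived group of $L$ does, by Proposition~\ref{prop_stabilizer} and Theorem~\ref{lst}). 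Consequently $\partial_0 := \partial - (\mathrm{id}\otimes\partial_Z)$ is a $B$-normalized derivation of weight $\mu$ that kills $\KK[Z]$, hence is $\KK[Z]$-linear, i.e.\ a $\KK[Z]$-family of vector fields on $P_u$.

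Next, since $\partial_0$ still commutes with left translations by $P_u$, for each $z\in Z$ it restricts to a left-invariant vector field on $P_u$; thus $\partial_0 = \delta_\xi$ for a unique $\xi\in\KK[Z]\otimes_\KK\mathfrak p_u$, where $\delta_\xi$ denotes the corresponding $\KK[Z]$-family of right-multiplication flows (so that $\delta_{e_\alpha}=\delta_\alpha$). The assignment $\xi\mapsto\delta_\xi$ is equivariant for the adjoint action of $L$ on $\mathfrak p_u$ together with the action on coefficients, so $B$-normalization of $\partial_0$ of weight $\mu$ translates into $\xi$ being a highest-weight vector of weight $\mu$ for $B\cap L$ in the $L$-module $\KK[Z]\otimes\mathfrak p_u$. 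Now I would decompose this module: the derived group of $L$ acts trivially on $\KK[Z] = \bigoplus_{\lambda\in\Gamma_Z}\KK f_\lambda$, so each $f_\lambda$ spans a one-dimensional $L$-submodule, while $\mathfrak p_u = \bigoplus_{\alpha\in\Omega}V_\alpha$ is multiplicity-free by Kostant's theorem, with $e_\alpha$ spanning the highest line of $V_\alpha$. The highest-weight vectors of weight $\mu$ in $\bigoplus_{\lambda,\alpha}\KK f_\lambda\otimes V_\alpha$ are therefore spanned by the products $f_{\mu-\alpha}\otimes e_\alpha$ subject to $(\mu-\alpha)\in\Gamma_Z$, that is, exactly by the vectors indexed by $\alpha\in\Omega_\mu^0$. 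This gives $\partial_0 = \sum_{\alpha\in\Omega_\mu^0}c_\alpha f_{\mu-\alpha}\delta_\alpha$ and hence formula~(\ref{eqn_LNDs_on_X0}).

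For the converse I would verify the three properties for a derivation of the stated form. The weight and the $B$-normalization are additive and essentially formal: $\delta_\alpha$ is $B$-normalized of weight $\alpha$, while $f_{\mu-\alpha}$ is $B$-semiinvariant of weight $\mu-\alpha$ and lies in $\Ker\delta_\alpha$, so $f_{\mu-\alpha}\delta_\alpha$ is $B$-normalized of weight $\mu$ and individually locally nilpotent, and $\mathrm{id}\otimes\partial_Z$ was already seen to be $B$-normalized of weight $\mu$ and locally nilpotent. The only substantial point, and the main obstacle, is that the \emph{sum} $\partial$ is again locally nilpotent, which does not follow formally since a sum of LNDs need not be an LND. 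To handle this I would integrate $\partial$ to a $\GG_a$-action: since $\partial_Z$ exponentiates to a $\GG_a$-action $\sigma_t$ on $Z$, I would look for an action $\Phi_t(p,z) = (p\cdot g_t(z),\sigma_t(z))$ with $g_t(z)\in P_u$. The cocycle condition for $\Phi$ reduces to the differential equation $\tfrac{d}{dt}g_t(z) = g_t(z)\cdot\xi(\sigma_t z)$ with $g_0 = e$ in the unipotent group $P_u$; because $\exp\colon\mathfrak p_u\to P_u$ is a polynomial isomorphism and $\sigma_t z$ and $\xi$ are polynomial, this equation has a unique solution $g_t(z)$ polynomial in $(t,z)$, the relevant series terminating by nilpotency of $\mathfrak p_u$. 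By uniqueness $\Phi_t$ is a genuine algebraic $\GG_a$-action, and comparing velocities on the generators $\KK[Z]$ and on the coordinate functions of $P_u$ shows that its associated derivation equals $\partial$; hence $\partial$ is locally nilpotent.
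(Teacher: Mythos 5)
Your argument is correct, but it diverges from the paper's in both halves, most substantially in the converse. In the direct implication the paper also splits off $\partial_Z$ and identifies the remainder with a combination of the $\delta_\alpha$, but it does so by passing to the open orbit $P_u\times Z_0$, on which $B$ acts transitively, so that the $B$-semiinvariant vector field $\partial-\partial_Z$ is pinned down by its value at the single point $(e,z_0)$ --- a $B\cap L_0$-semiinvariant vector of $\mathfrak p_u$ --- and the constraint $\mu-\alpha\in\Gamma_Z$ is imposed only afterwards, by requiring that the resulting field preserve $\KK[P_u\times Z]$. Your version replaces the point evaluation by a decomposition of the $L$-module $\mathfrak p_u\otimes_\KK\KK[Z]$ of invariant vertical fields; this is essentially the same computation done fibrewise over all of $Z$ rather than at $z_0$, and it produces the condition $\alpha\in\Omega_\mu^0$ directly. (You also supply a justification --- commutation with left $P_u$-translations --- for why $\partial$ preserves $\KK[Z]=A^{P_u}$, a point the paper leaves implicit.) For the converse you correctly isolate local nilpotency of the sum as the only nontrivial point, but your route --- integrating $\partial$ to a $\GG_a$-action by solving the non-autonomous equation $\tfrac{d}{dt}g_t=g_t\,\xi(\sigma_t z)$ in $P_u$ via a terminating time-ordered exponential --- is genuinely different from the paper's, which stays entirely algebraic: it filters a finite-dimensional $P_u$-submodule $V\subset\KK[P_u]$ by a flag with $\mathfrak p_u V_i\subset V_{i-1}$, observes that $\partial$ drops the $\delta_\alpha$-part one step down the flag, and concludes by induction on $i$ using local nilpotency of $\partial_Z$. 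The paper's argument is shorter and avoids any appeal to formal ODE theory over $\KK$; yours buys the stronger geometric conclusion that the flow has the explicit triangular form $(p,z)\mapsto(pg_t(z),\sigma_t z)$, but to be fully rigorous over an arbitrary algebraically closed field of characteristic zero you would still need to check that the Magnus/Dyson expansion of $g_t(z)$ is polynomial with values in $P_u$ and that the cocycle identity follows from uniqueness of formal solutions.
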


\begin{proof}
Suppose $\partial$ is a $B$-normalized LND of weight~$\mu$ on $\KK[P_u \times Z]$ and let $\partial_Z$ be the restriction of~$\partial$ to the subalgebra~$\KK[Z]$.
In what follows we regard $\partial_Z$ as a derivation on the whole algebra $\KK[P_u \times Z]$ by putting $\partial_Z(\KK[P_u]) = 0$.
The extension of the derivation $\partial - \partial_Z$ to the algebra $\KK[P_u \times Z_0]$ determines a $B$-semiinvariant vector field $\xi$ of weight~$\mu$ on the smooth variety $P_u \times Z_0$.
Since $B$ acts transitively on $P_u \times Z_0$, $\xi$ is uniquely determined by its value $v$ at the point $(e,z_0)$ where $e \in P_u$ is the identity element.
Since $\partial - \partial_Z$ acts trivially on~$\KK[Z_0]$, it follows that $v$ is a $B \cap L_0$-semiinvariant vector in~$\mathfrak p_u$ of weight $\left.\mu\right|_{T_0}$, therefore $v = \sum_{\alpha \in \Omega_\mu} c_\alpha e_\alpha$ for some $c_\alpha \in \KK$.
On the other hand, observe that the derivation $\sum_{\alpha \in \Omega_\mu} c_\alpha f_{\mu - \alpha}\delta_\alpha$ on $\KK[Z_0]$ is also $B$-semiinvariant of weight~$\mu$ and corresponds to the same tangent vector at $(e,z_0)$, hence it coincides with~$\partial - \partial_Z$.
Since this derivation preserves the algebra $\KK[P_u \times Z]$, the condition $c_\alpha = 0$ should hold for all $\alpha \in \Omega_\mu$ with $\mu - \alpha \notin \Gamma_Z$, which proves the first claim.

Now suppose $\partial$ is a derivation on $\KK[P_u \times Z]$ of the form~(\ref{eqn_LNDs_on_X0}).
Then $\partial$ is automatically $B$-normalized of weight~$\mu$, and it remains to prove that $\partial$ is locally nilpotent.
Since $\KK[P_u]$ is a rational $P_u$-module (with respect to the action on the right), it suffices to check that $\partial$ is locally nilpotent on an arbitrary subspace of the form $V \otimes_\KK \KK[Z]$ where $V \subset \KK[P_u]$ is a finite-dimensional $P_u$-invariant subspace.
As the image of the algebra $\mathfrak p_u$ in $\mathfrak{gl}(V)$ is nilpotent, there exists a flag of subspaces
\begin{equation}
0 = V_0 \subset V_1 \subset V_2 \subset \ldots \subset V_s = V
\end{equation}
with the property $\mathfrak p_u V_i \subset V_{i-1}$ for all $i=1,\ldots, s$.
It follows that for all $i = 1,\ldots, s$, $g \in V_i$, and $f \in \KK[Z]$ we have
\begin{equation}
\partial(gf) = \sum_{\alpha \in \Omega_\mu^0} c_\alpha  \delta_\alpha(g)f_{\mu - \alpha}f + g\partial_Z(f) \in g\partial_Z(f) + V_{i-1} \otimes_\KK \KK[Z]
\end{equation}
because $\delta_\alpha(g) = e_\alpha g \in V_{i-1}$ for all $\alpha \in \Omega$.
Since $\partial_Z$ is an LND on $\KK[Z]$, we obtain $\partial^k(gf) \in V_{i-1} \otimes_\KK \KK[Z]$ for some $k > 0$.
The proof is completed by induction on~$i$.
\end{proof}

\section{} \label{sect_7}

Retain the assumptions and notation of the previous section.
We now study when a $B$-normalized LND on the algebra $\KK[P_u \times Z]$ preserves the subalgebra $\KK[X]$ and thus defines a $B$-root subgroup on the whole variety~$X$.
Let $\lambda \mapsto \overline \lambda$ be an arbitrary projection operator on $\mathfrak X(T) \otimes_\ZZ \QQ$ to the subspace $M \otimes_\ZZ \QQ$.
Let also $\mathcal E_Z \subset N_\QQ$ be the cone generated by the set $\lbrace \varkappa(D) \mid D \in \mathcal D^B \setminus \mathcal F \rbrace$, so that $\Gamma_Z = \Gamma(\mathcal E_Z)$ in view of~(\ref{eqn_Gamma_Z}) and $\KK[Z] = A(\mathcal E_Z)$ (see the notation in \S\,\ref{sect_notation}).

\begin{theorem} \label{thm_constants}
There exists a collection of constants $\lbrace C_D \mid D \in \mathcal F \rbrace$ with the following property: if $\mu \in \mathfrak X(T)$ and $\langle \nu_D, \overline{\mu} \rangle \ge C_D$ for all $D \in \mathcal F$ then every $B$-normalized LND on $\KK[P_u \times Z]$ of weight~$\mu$ preserves~$\KK[X]$.
\end{theorem}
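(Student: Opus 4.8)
The plan is to deduce that $\partial$ stabilizes $\KK[X]$ from a single order estimate along each divisor $D\in\mathcal F$, after two routine reductions.

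\emph{Reductions.} First I would record that, $X$ being normal and affine with $X\setminus X_\mathcal F=\bigcup_{D\in\mathcal F}D$, one has $\KK[X]=\lbrace h\in\KK[X_\mathcal F]\mid \ord_D(h)\ge 0 \text{ for all } D\in\mathcal F\rbrace$. Since $\partial$ is an LND of $\KK[X_\mathcal F]=\KK[P_u\times Z]$, it maps $\KK[X]$ into $\KK[X_\mathcal F]$ automatically, so the whole statement reduces to proving $\ord_D(\partial h)\ge 0$ for $h\in\KK[X]$ and $D\in\mathcal F$. Next, as $\Gamma$ is finitely generated, I would choose finitely many simple $G$-submodules $V_1,\dots,V_m\subset\KK[X]$ that generate $\KK[X]$ as an algebra; since $\KK[X]$ is a subalgebra of $\KK[X_\mathcal F]$ and $\partial$ obeys the Leibniz rule, once $\partial(V_j)\subset\KK[X]$ for all $j$ it follows that $\partial(\KK[X])\subset\KK[X]$. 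Finally, since each $D\in\mathcal F$ is $B$-stable, the valuation $\ord_D$ is $T$-invariant, so $\ord_D$ of a sum of $T$-weight vectors is the minimum of their orders; hence it suffices to bound $\ord_D(\partial v)\ge 0$ for $v$ running over a fixed $T$-weight basis of $V_1\oplus\dots\oplus V_m$.

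\emph{The estimate.} Here I would use Theorem~\ref{thm_LNDs_on_X0} to write $\partial=\sum_{\alpha\in\Omega_\mu^0}c_\alpha f_{\mu-\alpha}\delta_\alpha+\partial_Z$, and isolate all the dependence on $\mu$ into the scalar-type factors $f_{\mu-\alpha}$ and $f_\mu$. For the toric summand, Theorem~\ref{thm_LND_toric} supplies a ray $\rho\in\mathcal E_Z^1$ with $\langle\rho,\mu\rangle=-1$ and $\partial_Z(f_\lambda)=\langle\rho,\lambda\rangle f_{\lambda+\mu}$, so that $\partial_Z=f_\mu\cdot D_\rho$, where $D_\rho(f_\lambda)=\langle\rho,\lambda\rangle f_\lambda$ is a weight operator independent of $\mu$. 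Recalling $\ord_D(f_\lambda)=\langle\varkappa(D),\lambda\rangle$ for all $\lambda\in M$ (valid for rational $f_\lambda$ as well), with $\nu_D$ the valuation vector $\varkappa(D)$, and using $\mu-\alpha\in M$ together with $\overline{\mu-\alpha}=\overline\mu-\overline\alpha$, I obtain for the individual summands
\[
\ord_D\big(f_{\mu-\alpha}\delta_\alpha(v)\big)=\langle\nu_D,\overline\mu\rangle-\langle\nu_D,\overline\alpha\rangle+\ord_D\big(\delta_\alpha(v)\big),
\]
and, when $\partial_Z\ne 0$ (which forces $\mu\in M$, hence $\overline\mu=\mu$),
\[
\ord_D\big(\partial_Z(v)\big)=\langle\nu_D,\overline\mu\rangle+\ord_D\big(D_\rho(v)\big).
\]
In both lines the operators $\delta_\alpha$ and $D_\rho$ are fixed and applied to a vector $v$ of a fixed finite basis, so the quantities $\ord_D(\delta_\alpha(v))$ and $\ord_D(D_\rho(v))$ are finite and depend on neither $\mu$ nor the $c_\alpha$.

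\emph{Conclusion.} The finitely many numbers $\langle\nu_D,\overline\alpha\rangle$ ($\alpha\in\Omega$), $\ord_D(\delta_\alpha(v))$ and $\ord_D(D_\rho(v))$ (over $\alpha\in\Omega$, $\rho\in\mathcal E_Z^1$, and $v$ in the basis) are bounded below, so putting
\[
C_D=\max\Big(\max_{\alpha,v}\big(\langle\nu_D,\overline\alpha\rangle-\ord_D(\delta_\alpha(v))\big),\ \max_{\rho,v}\big(-\ord_D(D_\rho(v))\big)\Big)
\]
the hypothesis $\langle\nu_D,\overline\mu\rangle\ge C_D$ renders the order of every summand of $\partial v$ nonnegative (the scalars $c_\alpha$ do not affect orders, and vanishing summands cost nothing), whence $\ord_D(\partial v)\ge 0$ by the ultrametric inequality. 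As this holds for all basis vectors $v$ and all $D\in\mathcal F$, the collection $\lbrace C_D\rbrace$ has the required property. The hard part is not any geometry but the valuation bookkeeping: one must treat $\ord_D$ as a $T$-invariant discrete valuation with $\ord_D(f_\lambda)=\langle\varkappa(D),\lambda\rangle$ even for the rational functions $f_{\mu-\alpha}$, $f_\mu$, and separate cleanly the $\overline\mu$-linear part (carried entirely by these factors) from the uniformly bounded part coming from the fixed derivations on the fixed generating modules; a subsidiary subtlety is the case $\mu\notin M$, where $\partial_Z=0$ and only the $\delta_\alpha$-summands survive, each with $\mu-\alpha\in M$, so that $\langle\nu_D,\overline\mu\rangle$ remains the governing quantity.
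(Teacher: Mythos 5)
Your proposal is correct and follows essentially the same route as the paper: fix a finite generating set of $\KK[X]$, decompose the LND via Theorem~\ref{thm_LNDs_on_X0}, observe that the $\mu$-dependence of each summand's order along $D$ is carried entirely by the factors $f_{\mu-\alpha}$ and $f_\mu$ (contributing $\langle \nu_D,\overline\mu\rangle$ plus quantities ranging over a finite, $\mu$-independent set), and take $C_D$ large enough to dominate them. Your factorization $\partial_Z=f_\mu D_\rho$ and the extra reduction to $T$-weight vectors are only cosmetic variations on the paper's direct expansion of $\partial_Z(F_i)$.
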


\begin{proof}
Fix a generating system $F_1,\ldots, F_k$ of the algebra $\KK[X]$.
For every $i = 1,\ldots, k$ we have $F_i = \sum_{j=1}^{n_i}g_{ij}f_{\lambda_{ij}}$ for some functions $g_{ij} \in \KK[P_u]$ and weights $\lambda_{ij} \in \Gamma_Z$.
If $\partial$ is an arbitrary derivation of the algebra $\KK[P_u \times Z]$ then $\partial$ preserves $\KK[X]$ if and only if $\ord_D (\partial(F_i)) \ge 0$ for all $D \in \mathcal F$ and $i = 1,\ldots, k$.

In view of Theorem~\ref{thm_LNDs_on_X0}, in order to find the required collection of constants, for each weight $\mu \in \mathfrak X(T)$ it suffices to require that each summand in~(\ref{eqn_LNDs_on_X0}) preserve~$\KK[X]$.

For all $D \in \mathcal F$, $i=1,\ldots,k$, and $\alpha \in \Omega_\mu^0$ with $\delta_\alpha(F_i) \ne 0$ we have
\begin{equation} \label{eqn_ord1}
\ord_D (f_{\mu-\alpha} \delta_\alpha(F_i)) = \ord_D(f_{\mu - \alpha}) + \ord_D (\delta_\alpha(F_i)) = \langle \nu_D, \overline \mu \rangle - \langle \nu_D, \overline \alpha \rangle + \ord_D(\delta_\alpha(F_i)).
\end{equation}

If $\partial_Z \ne 0$ then by Theorem~\ref{thm_LND_toric} we have  $\mu \in \mathfrak R_\rho(\mathcal E_Z)$ for some $\rho \in \mathcal E_Z^1$ (in particular, $\mu \in M$ and $\overline \mu = \mu$) and there exists a nonzero constant $c \in \KK$ such that $\partial_Z(f_\lambda) = c\langle \rho, \lambda \rangle f_{\lambda} f_{\mu}$ for all $\lambda \in \Gamma_Z$.
Then for all $D \in \mathcal F$ and $i = 1,\ldots, k$ with $\partial_Z(F_i) \ne 0$ we have
\begin{multline} \label{eqn_ord2}
\ord_D(\partial_Z(F_i)) = \ord_D(\sum_{j=1}^{n_i}\langle \rho, \lambda_{ij}\rangle g_i f_{\lambda_{ij}}f_\mu) = \langle \nu_D, \overline\mu \rangle + \ord_D(\sum_{j=1}^{n_i}\langle \rho, \lambda_{ij}\rangle g_i f_{\lambda_{ij}}) \ge \\
\langle \nu_D, \overline\mu \rangle + \min\lbrace \ord_D(g_if_{\lambda_{ij}}) \mid j = 1,\ldots, n_i\rbrace.
\end{multline}
It remains to notice that the expressions in~(\ref{eqn_ord1}) and~(\ref{eqn_ord2}) are nonnegative for a suitable choice of the required constants.
\end{proof}

\section{}

Let us deduce several consequences of Theorems~\ref{thm_LNDs_on_X0} and~\ref{thm_constants}.

\begin{corollary}
All $B$-normalized LNDs on $\KK[X]$ of the same weight form a finite-di\-men\-sion\-al vector space over~$\KK$.
\end{corollary}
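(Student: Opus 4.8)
The plan is to fix a weight $\mu \in \mathfrak X(T)$ and study the set $S_\mu$ of all $B$-normalized LNDs of weight~$\mu$ on $\KK[X]$ by pushing everything into the local models of Theorem~\ref{thm_LNDs_on_X0}. First I would recall from \S\,\ref{sect_6} that every such LND corresponds to a $B$-root subgroup $H$ with $\mathcal F_H \in \{\mathcal D\} \cup \{\mathcal D \setminus \{D_0\} \mid D_0 \text{ a color of type } (T)\}$, so there are only finitely many admissible subsets $\mathcal F \subseteq \mathcal D$ to consider, and $\partial$ preserves $X_\mathcal F$ exactly when $\mathcal F \subseteq \mathcal F_H$.

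For each admissible $\mathcal F$ I would apply Theorem~\ref{lst} to identify $X_\mathcal F \simeq P_u \times Z$ and then Theorem~\ref{thm_LNDs_on_X0} to describe all $B$-normalized LNDs of weight~$\mu$ on $\KK[X_\mathcal F]$ via formula~(\ref{eqn_LNDs_on_X0}). This set is visibly closed under addition and scalar multiplication, being parametrized linearly by the constants $c_\alpha$ ($\alpha \in \Omega_\mu^0$) and by the choice of $\partial_Z$, and by the converse half of Theorem~\ref{thm_LNDs_on_X0} every such expression is again an LND; hence they form a $\KK$-vector space $W_\mathcal F^\mu$. It is finite-dimensional: $\Omega_\mu^0$ is finite, and by Theorem~\ref{thm_LND_toric} together with the disjointness of the decomposition $\mathfrak R(\mathcal E_Z) = \bigsqcup_\rho \mathfrak R_\rho(\mathcal E_Z)$, the weight $\mu$ lies in at most one $\mathfrak R_\rho(\mathcal E_Z)$, so the $T$-normalized LNDs of weight~$\mu$ on $\KK[Z]$ span a space of dimension at most one. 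Thus $\dim_\KK W_\mathcal F^\mu \le |\Omega_\mu^0| + 1$.

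Next I would intersect with the condition of preserving $\KK[X]$: the subset $S_\mathcal F = \{\partial \in W_\mathcal F^\mu \mid \partial(\KK[X]) \subseteq \KK[X]\}$ is cut out by a linear condition, hence is a linear subspace of $W_\mathcal F^\mu$, and restriction identifies it with the set of $B$-normalized LNDs of weight~$\mu$ on $\KK[X]$ that preserve $X_\mathcal F$. Therefore $S_\mu = \bigcup_\mathcal F S_\mathcal F$, a finite union of finite-dimensional subspaces all sitting inside the space of $B$-normalized derivations of $\KK(X)$ of weight~$\mu$.

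The crux is to show this union is a single subspace, equivalently that all elements of $S_\mu$ preserve one common $X_\mathcal F$; since vertical and toroidal LNDs already preserve every admissible $X_\mathcal F$, this reduces to showing that at most one color is moved at the fixed weight~$\mu$. This is the genuine analogue of the disjointness $\mathfrak R(\mathcal E_Z) = \bigsqcup_\rho \mathfrak R_\rho(\mathcal E_Z)$, and it is where I expect the main difficulty. I would prove it by an order computation: if $\partial$ moves $D_0$ and preserves $\KK[X]$, then, evaluating $\partial$ on the functions $f_\lambda$ with $\lambda \in \Gamma$ and using formula~(\ref{eqn_LND_toric}) for the toric part, one obtains $\langle \varkappa(D), \lambda + \mu \rangle \ge 0$ whenever $\langle \varkappa(D_0), \lambda \rangle > 0$; choosing $\lambda$ on the appropriate face of the weight cone forces $\langle \varkappa(D), \mu \rangle \ge 0$ for every color $D$ whose ray differs from that of $D_0$. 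Since moving $D$ would require $\langle \varkappa(D), \mu \rangle = -1$, no second color with a distinct $\varkappa$-image can be moved, and the remaining coincidence $\varkappa(D) = \varkappa(D_0)$ I would exclude using the type-$(T)$ structure recalled in \S\,\ref{S_types_of_colors}. Once a common $\mathcal F = \mathcal D \setminus \{D_0\}$ (or $\mathcal F = \mathcal D$ when no color is moved) is available, $S_\mu = S_\mathcal F$ is a finite-dimensional $\KK$-vector space, as required.
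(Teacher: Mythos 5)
Your overall architecture coincides with the paper's proof: pass to the local model $X_{\mathcal F} \simeq P_u \times Z$, use Theorem~\ref{thm_LNDs_on_X0} together with Theorem~\ref{thm_LND_toric} to see that the $B$-normalized LNDs of weight~$\mu$ on $\KK[X_{\mathcal F}]$ form a finite-dimensional space (the $\delta_\alpha$-part is parametrized by finitely many constants, and the toric part is at most one-dimensional because the sets $\mathfrak R_\rho$ are disjoint and the LND of a given Demazure root weight is unique up to proportionality), and then observe that preserving $\KK[X]$ cuts out a linear subspace. The one point where you diverge is the step you correctly single out as the crux: that all $B$-root subgroups of a fixed weight preserve a common $X_{\mathcal F}$, equivalently that any two horizontal ones move the same divisor. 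The paper does not reprove this; it is exactly~\cite[Prop.~4.22]{AA}, which it cites (the same citation is invoked in the proof of Corollary~\ref{crl_moves}).

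Your sketched substitute for that citation is where the genuine gaps lie. First, the premise that a $B$-root subgroup moving $D$ forces $\langle \varkappa(D), \mu \rangle = -1$ is essentially the content of the cited proposition and does not follow from Sections 6--7 alone: on $\KK[X_{\mathcal F}]$ the toric part $\partial_Z$ only interacts with the divisors outside~$\mathcal F$, so tying the weight of $\partial$ to $\varkappa(D)$ for the moved color requires a separate argument. Second, the degenerate case $\varkappa(D) = \varkappa(D_0)$ with $D \ne D_0$ is dismissed with a one-line appeal to the type-$(T)$ structure of \S\,\ref{S_types_of_colors}, but nothing there rules it out directly; that this configuration is a real obstruction is visible in the hypothesis of Corollary~\ref{crl_moves}, which explicitly demands that no other $\varkappa(D')$ lie on the ray $\QQ_{\ge 0}\rho$. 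If you replace your sketch of the crux by the reference to~\cite[Prop.~4.22]{AA}, your argument becomes the paper's proof.
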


\begin{proof}
In view of~\cite[Prop.~4.22]{AA}, any two horizontal $B$-root subgroups on~$X$ of the same weight~$\mu$ move the same divisor $D \in \mathcal D^B$.
Consequently, under the conditions of \S\,\ref{sect_6} one can choose a subset $\mathcal F \subset \mathcal D$ such that all $B$-root subgroups on~$X$ of weight~$\mu$ preserve~$X_{\mathcal F}$.
By Theorems~\ref{thm_LNDs_on_X0} and~\ref{thm_LND_toric}, all $B$-normalized LNDs on $\KK[X_{\mathcal F}]$ form a finite-dimensional vector space.
The condition of preserving the subalgebra~$\KK[X]$ determines a subspace in that vector space.
\end{proof}

\begin{corollary}  \label{crl_moves}
Let $D \in \mathcal D^B$ be such that either $D \in \mathcal D^G$ or $D$ is a color of type~$(T)$.
Suppose there exists $\rho \in \mathcal E^1$ such that $\varkappa(D) \in \QQ_{\ge0}\rho$ and $\varkappa(D') \notin \QQ_{\ge0}\rho$ for all $D' \in \mathcal D^B \setminus \lbrace D \rbrace$.
Then there exists a $B$-root subgroup on~$X$ that moves~$D$.
\end{corollary}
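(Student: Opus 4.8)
The plan is to realize the desired $B$-root subgroup as the $\GG_a$-subgroup integrating a toric LND attached to the ray $\rho$, transported to $X$ via the local structure theorem and then pushed deep enough into the relevant cone to preserve $\KK[X]$. First I would choose $\mathcal F = \mathcal D$ if $D \in \mathcal D^G$ and $\mathcal F = \mathcal D \setminus \lbrace D \rbrace$ if $D$ is a color of type~$(T)$; in both cases $D \in \mathcal D^B \setminus \mathcal F$ and, by Proposition~\ref{prop_stabilizer}, $P_{\mathcal F}$ is the stabilizer of the open $B$-orbit, so the constructions of \S\,\ref{sect_6}--\S\,\ref{sect_7} apply and identify $X_{\mathcal F} \simeq P_u \times Z$ with $\KK[Z] = A(\mathcal E_Z)$, where $\mathcal E_Z$ is generated by $\lbrace \varkappa(D') \mid D' \in \mathcal D^B \setminus \mathcal F \rbrace \ni \varkappa(D)$. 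Since $\QQ_{\ge 0}\rho$ is a face of the full cone $\mathcal E$ contained in the subcone $\mathcal E_Z$, it is automatically a face of $\mathcal E_Z$, and as $\rho$ is primitive we obtain $\rho \in \mathcal E_Z^1$; this is the ray corresponding to $D$.

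The core of the argument is to produce a single weight $\mu \in M$ that is simultaneously a Demazure root of $\mathcal E_Z$ for $\rho$ and deep enough for Theorem~\ref{thm_constants}. Here I would use the hypothesis: because $\QQ_{\ge 0}\rho$ is an exposed face of the polyhedral cone $\mathcal E$ containing no $\varkappa(D')$ with $D' \ne D$, after scaling there is $\gamma \in M$ with $\langle \rho, \gamma \rangle = 0$ and $\langle \varkappa(D'), \gamma \rangle > 0$ for every $D' \in \mathcal D^B \setminus \lbrace D \rbrace$. Taking any $\mu_0 \in M$ with $\langle \rho, \mu_0 \rangle = -1$ (available since $\rho$ is primitive) and setting $\mu = \mu_0 + n\gamma$, for $n \gg 0$ one gets $\langle \rho, \mu \rangle = -1$ and $\langle \rho', \mu \rangle \ge 0$ for all $\rho' \in \mathcal E_Z^1 \setminus \lbrace \rho \rbrace$, so that $\mu \in \mathfrak R_\rho(\mathcal E_Z)$, and at the same time $\langle \varkappa(D'), \mu \rangle \ge C_{D'}$ for all $D' \in \mathcal F$; the single shift $n\gamma$ settles both families of inequalities because $\gamma$ is strictly positive on every generator off the ray.

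With this $\mu$, Theorem~\ref{thm_LND_toric}(b) yields a $T$-normalized LND $\partial_\mu$ on $A(\mathcal E_Z) = \KK[Z]$; extending it trivially over $\KK[P_u]$ gives, by Theorem~\ref{thm_LNDs_on_X0} (the summand $\partial_Z$ with all $c_\alpha = 0$), a $B$-normalized LND of weight $\mu$ on $\KK[X_{\mathcal F}]$, which by Theorem~\ref{thm_constants} preserves $\KK[X]$ and hence integrates to a $B$-root subgroup $H$ on $X$. To see that $H$ moves $D$ I would pick $\lambda \in \Gamma$ with $\langle \rho, \lambda \rangle > 0$ (which exists because $\rho \ne 0$ while $\langle \rho, \cdot \rangle \ge 0$ on $\Gamma$ and $\Gamma$ spans $M$) and write $\varkappa(D) = c\rho$ with $c > 0$; then $\partial_\mu(f_\lambda) = \langle \rho, \lambda \rangle f_{\lambda + \mu}$ with $\ord_D(f_{\lambda + \mu}) = c\langle \rho, \lambda \rangle - c < \ord_D(f_\lambda)$, so the LND strictly lowers $\ord_D$. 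Were $D$ preserved by $H$, the valuation $\ord_D$ would be invariant under the integrating $\GG_a$-action and hence could not be lowered by the LND, a contradiction; therefore $HD \ne D$.

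I expect the main obstacle to be this middle step: forcing one weight to be both a genuine Demazure root for $\rho$ (so that Theorem~\ref{thm_LND_toric} applies) and deep enough for the extension criterion of Theorem~\ref{thm_constants} to hold. The crux is the existence of the separating functional $\gamma$, which is exactly where the hypothesis isolating $\varkappa(D)$ on its ray from all other $\varkappa(D')$ enters.
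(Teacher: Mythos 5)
Your proposal is correct and follows essentially the same route as the paper: choose $\mathcal F$ as you do, note $\rho \in \mathcal E_Z^1$, use the separating functional ($\gamma$ in your notation, $\lambda$ in the paper's) coming from the hypothesis that $\QQ_{\ge0}\rho$ isolates $\varkappa(D)$ to shift a weight with $\langle\rho,\cdot\rangle=-1$ until it is simultaneously a Demazure root of $\mathcal E_Z$ and deep enough for Theorem~\ref{thm_constants}, then integrate the resulting toric LND. The only divergence is at the very end: the paper concludes that the subgroup moves $D$ by citing \cite[Prop.~4.22]{AA}, whereas you give a self-contained valuation argument (the LND strictly lowers $\ord_D(f_\lambda)$ by $c=\langle\varkappa(D),\cdot\rangle/\langle\rho,\cdot\rangle>0$, which is impossible if $HD=D$), which is a valid and pleasantly independent substitute.
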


\begin{proof}
Put $\mathcal F = \mathcal D$ for $D \in \mathcal D^G$ and $\mathcal F = \mathcal D \setminus \lbrace D \rbrace$ otherwise.
Retain the notation of \S\S\,\ref{sect_6},\ref{sect_7}.
Since $\rho \in \mathcal E_Z$ and $\mathcal E_Z \subset \mathcal E$, we have $\rho \in \mathcal E_Z^1$.
Choose any element $\mu \in \mathfrak R_\rho(\mathcal E_Z)$ and consider the $B$-normalized LND $\partial_\mu$ on $\KK[P_u \times Z]$ of weight~$\mu$ that acts trivially on~$\KK[P_u]$ and by the formula~(\ref{eqn_LND_toric}) on~$\KK[Z]$.
It follows from the hypothesis that there exists $\lambda \in \Gamma$ such that $\langle \rho, \lambda \rangle = 0$ and $\langle \varkappa(D'),\lambda \rangle > 0$ for all $D' \in \mathcal F$.
Then for all integers $N > 0$ we have $N\lambda + \mu \in \mathfrak R_\rho(\mathcal E_Z)$.
By Theorem~\ref{thm_constants} there is a value $N_0$ such that, for all $N \ge N_0$, the LND $\partial_{N\lambda+\mu} = f_{N\lambda}\partial_\mu$ preserves $\KK[X]$ and therefore defines a $B$-root subgroup on~$X$.
This $B$-root subgroup moves~$D$ thanks to~\cite[Prop.~4.22]{AA}.
\end{proof}

In view of~~\cite[Prop.~3.9]{AA}, every $D \in \mathcal D^G$ automatically satisfies the conditions of Corollary~\ref{crl_moves}.
This implies the following result, which was stated as a conjecture in~\cite[Conj.~4.29]{AA}.

\begin{corollary}
For every $D \in \mathcal D^G$ there exists a $B$-root subgroup on~$X$ that moves~$D$.
\end{corollary}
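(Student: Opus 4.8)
The plan is to reduce the claim to Corollary~\ref{crl_moves} by verifying that every $G$-stable prime divisor automatically satisfies its two hypotheses. Recall that $\mathcal E \subset N_\QQ$ is the strictly convex cone generated by the set $\lbrace \varkappa(D) \mid D \in \mathcal D^B \rbrace$, its strict convexity being guaranteed by~(\ref{eqn_Gamma}), and that $\mathcal E^1$ is the set of primitive generators of the extremal rays of~$\mathcal E$. Thus, for a fixed $D \in \mathcal D^G$, the task is to exhibit an element $\rho \in \mathcal E^1$ with $\varkappa(D) \in \QQ_{\ge0}\rho$ and $\varkappa(D') \notin \QQ_{\ge0}\rho$ for every $D' \in \mathcal D^B \setminus \lbrace D \rbrace$.

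The substantive input is the position of the images of $G$-stable divisors under~$\varkappa$. I would invoke~\cite[Prop.~3.9]{AA}, according to which, for each $D \in \mathcal D^G$, the ray $\QQ_{\ge0}\varkappa(D)$ is an extremal ray of~$\mathcal E$ that contains no other element $\varkappa(D')$; this reflects the fact that $G$-invariant valuations are separated from the colors inside the valuation cone. Letting $\rho$ be the primitive generator of this ray, both conditions of Corollary~\ref{crl_moves} hold, and that corollary directly produces a $B$-root subgroup on~$X$ moving~$D$ (a toroidal one, in the terminology introduced after Proposition~\ref{UN_colors}).

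The only real obstacle, were one to argue from scratch rather than cite~\cite[Prop.~3.9]{AA}, would be to establish the extremality and separation of the ray $\QQ_{\ge0}\varkappa(D)$: one must rule out that a $G$-invariant valuation is a positive combination of the remaining $\varkappa(D')$ and that some color lies on the same ray. As this combinatorial fact about $G$-stable divisors in affine spherical varieties is already on record, the present deduction is immediate.
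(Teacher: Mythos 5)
Your proposal is correct and follows exactly the paper's own argument: the paper likewise deduces the statement by citing \cite[Prop.~3.9]{AA} to verify that every $D \in \mathcal D^G$ satisfies the hypotheses of Corollary~\ref{crl_moves}. Your identification of $\mathcal E$ as the cone generated by $\lbrace \varkappa(D) \mid D \in \mathcal D^B \rbrace$ and of the resulting subgroup as toroidal is consistent with the paper's setup.
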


\smallskip

\textbf{Financial support}.
The work of Roman Avdeev was supported by the Russian Science Foundation (grant no.~22-41-02019).
The work of Vladimir Zhgoon was performed within the state assignment for basic scientific research (project no. FNEF-2022-0011) and the HSE University Basic Research Program; it was also partially supported by the Simons Foundation.


\originalsectionstyle


\begin{thebibliography}{1000}


\setlength{\itemsep}{-\parsep}

\bibitem[AA]{AA}
I.~Arzhantsev, R.~Avdeev, \textit{Root subroups on affine spherical varieties}, preprint (2021), see\\
\href{http://arxiv.org/abs/2012.02088v2}%
{\texttt{arXiv:2012.02088v2\,[math.AG]}}; published in Selecta Math. (N.\,S.) \textbf{28} (2022), Article~60.

\bibitem[Br]{BBORB}
M.~Brion, \textit{On orbit closures of spherical subgroups in flag varieties}, Comment. Math. Helv. \textbf{76} (2001), no.~2, 263--299.

\bibitem[BLV]{BLV}
M.~Brion, D.~Luna, Th.~Vust, \textit{Espaces homog\`enes sph\'eriques}, Invent. Math. \textbf{84} (1986), no.~3, 617--632.

\bibitem[Kn1]{asymp}
F.~Knop, \textit{The asymptotic behavior of invariant collective motion}, Invent. Math. \textbf{116} (1994), no.~1, 309--328.

\bibitem[Kn2]{BORB}
F.~Knop, \textit{On the set of orbits for a Borel subgroup}, Comment. Math. Helv. \textbf{70} (1995), no.~2, 285--309.

\bibitem[Ko]{KOS}
B.~Kostant, \textit{Root systems for Levi factors and Borel-de
Siebenthal theory}, in: \textit{Symmetry and Spaces}, Progress in
Mathematics, vol.~\textbf{278}, 2010, 129--152; see also
\href{http://arxiv.org/abs/0711.2809}%
{\texttt{arXiv:0711.2809\,[math.RT]}}.

\bibitem[Li]{L1}
A.~Liendo, \textit{Affine $\mathbb{T}$-varieties of complexity one and locally nilpotent derivations}, Transform. Groups \textbf{15} (2010), no.~2, 389--425.

\bibitem[Lu]{Lu97}
D.~Luna, \textit{Grosses cellules pour les vari\'et\'es sph\'eriques}, in: \textit{Algebraic Groups and Lie Groups}, Austral. Math. Soc. Lect. Ser.~\textbf{9}, Cambridge Univ. Press, Cambridge, 1997, 267--280.

\bibitem[Ti]{Tim}
D.\,A.~Timashev, \textit{Homogeneous spaces and equivariant embeddings}, Encycl. Math. Sci., vol.~\textbf{138}, Springer-Verlag, Berlin Heidelberg, 2011.

\end{thebibliography}
\end{document}